\documentclass{amsart}

\numberwithin{equation}{section}

%%%%%%%packages
\usepackage{amssymb}
\usepackage{enumerate, xspace}
\usepackage{marvosym} %monetary symbols, \EUR or \EURcr, \EyesDollar
\usepackage[sans]{dsfont}        %allows \mathds{E 1}, without sans is less heavy
\usepackage[backgroundcolor=blue!20!white, linecolor=blue!20!white,
textsize=footnotesize]{todonotes}
\usepackage[colorlinks]{hyperref}
%\usepackage{cases}

%%%%%%%%%Pagination
\hfuzz=15pt

%%%%%%%%%Theorems

\newtheorem{thm}{Theorem}[section]
\newtheorem{lem}[thm]{Lemma}

\newtheorem{rem}[thm]{Remark}

%%%%%%%%%%mathcal
\newcommand\cB{{\mathcal B}}
\newcommand\cC{{\mathcal C}}

\newcommand\cL{{\mathcal L}}

\newcommand\cP{{\mathcal P}}

%%%%%%%%%%%%mathbb

\newcommand\bC{{\mathbb C}}

\newcommand\bN{{\mathbb N}}

\newcommand\bR{{\mathbb R}}

%%%%%%%%%%%%mathfrak
\newcommand\fkM{{\mathfrak M}}

%%%%%%%%%%%greek
\newcommand\ve{\varepsilon}

\newcommand\vf{\varphi}

%%Bold math

%%%%%%%%%%%%%%%various
\newcommand\Id{{\mathds{1}}}

\newcommand{\cPl}{\cP^{\text{long}}}
\newcommand{\cPs}{\cP^{\text{short}}}
\begin{document}

\title{A footnote on Expanding maps}
\author{Carlangelo Liverani}
\address{Carlangelo Liverani\\
Dipartimento di Matematica\\
II Universit\`{a} di Roma (Tor Vergata)\\
Via della Ricerca Scientifica, 00133 Roma, Italy.}
\email{{\tt liverani@mat.uniroma2.it}}
\thanks{ It is a pleasure to thank Oliver Butterley for pointing out the need of this note, for many interesting discussions related to these type of problems and for carefully reading a preliminary version of this note. I also thank Luigi Ambrosio for helpful references and Viviane Baladi and the anonymous referee for helpful comments. Work supported by the European Advanced Grant Macroscopic Laws
and Dynamical Systems (MALADY) (ERC AdG 246953).}
%\date{\today. {\bf File: {\jobname}.tex.}}% eliminate in final version
\begin{abstract}
I introduce Banach spaces on which it is possible to precisely characterize the spectrum of the transfer operator associated to a piecewise expanding map with H\"older weight.
\end{abstract}
\keywords{Expanding maps, decay of correlations, Transfer operator.}
\subjclass[2000]{37A05, 37A50, 37D50}
\maketitle

%%%PAPER
\section{Introduction}
Lately there is some renewed interest in different norms which allow one to analyze transfer operators associated to expanding maps. Such an interest has several motivations, one of the most relevant being the study of semi flows arising from Lorenz like models (e.g., see \cite{AGP}). At the same time, the extension of transfer operator methods to the hyperbolic setting \cite{BKL, GL1, GL2,  BT1, BT2, BG1, BG2, DL, Li2, Ts, BL}, just to mention a few, has revitalized the subject. Particular attention has been devoted to the case in which the map is piecewise smooth and its derivative or the weight have low regularity (H\"older instead of $\cC^1$). Several possibilities have been and are currently being explored trying to improve on the classical BV scheme \cite{Ry, Co, Li1} or its relevant variants \cite{Ke, Sa}. Two recent interesting contributions are \cite{Th,Bu}.

The purpose of this note is to comment on an old proposal of mine, put forward in footnote 12 of \cite[page 193]{Li}, that has gone mostly unnoticed and/or not understood. Here I show that it can be easily applied to many relevant situations yielding the strongest results so far. I present the approach in the expanding one dimensional case but I see no obstacles in extending it to higher dimensions (following \cite{Li1}) or, with some more work, to the hyperbolic setting.

In the next section I will detail the proposed Banach space which is a weakening of $BV$ in the spirit of fractional order Sobolev spaces but avoiding completely definitions based on Fourier transforms. In the final section I will detail some settings where the above strategy can be applied and I will give the main result of the paper.

\section{ The Banach space}
Let $\fkM$ be the Banach space of complex valued Borel measures on $[0,1]$ equipped with the total variation norm.
For each $\vf\in \cC^1([0,1],\bC)$, $\mu\in \fkM$, let us define, for each $\alpha\in[0,1]$,
\[
\begin{split}
&|\vf|_\alpha=\sup_{x\in [0,1]}|\vf(x)|+\sup_{x,y\in[0,1]}\frac{|\vf(x)-\vf(y)|}{|x-y|^\alpha}\\
&\|\mu\|_\alpha=\sup_{\{\vf\in\cC^1\;:\;|\vf|_\alpha\leq 1\}}|\mu(\vf')|.
\end{split}
\]
We can then define $\cB_\alpha=\{\mu\in\fkM\;:\;\|\mu\|_\alpha<\infty\}$.  Note that $\cB_0$ is the space of absolutely continuos measures with density in $BV$ while $\cB_1=\fkM$.
\begin{rem} In the following we give, for the reader's convenience, a self contained proof of the relevant properties of the spaces $\cB_\alpha$. Note however that some results are known in larger generality than needed here \cite{Gr, Zu}. Also there is a connection between our spaces and the theory of BV functions on snowflake spaces \cite{Zu}.
\end{rem}
\begin{lem}
If $\alpha\in [0,1)$ and $\mu\in\cB_\alpha$, then $\mu$ is absolutely continuous with respect to Lebesgue and, calling $h$ its density,\footnote{ In this note the $L^p$ spaces are all w.r.t. the Lebesgue measure.} 
\[
|h|_{L^{\frac 1{\alpha}}}\leq 2 \|\mu\|_\alpha.
\] 
In addition $\cB_\alpha$ is a Banach space.
\end{lem}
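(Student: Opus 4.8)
The plan is to exploit the duality hidden in the definition: controlling $\mu(\vf')$ by the H\"older norm $|\vf|_\alpha$ of $\vf$ is, after recovering $\vf$ from its derivative, the same as controlling $\mu$ against $L^{1/(1-\alpha)}$ functions, and the exponent conjugate to $\frac{1}{1-\alpha}$ is exactly $\frac1\alpha$.

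For the absolute continuity and the $L^{1/\alpha}$ bound I would start from an arbitrary $\psi\in\cC^0([0,1],\bC)$ and set $\vf(x)=\int_0^x\psi(t)\,dt$, so that $\vf\in\cC^1$ and $\vf'=\psi$. The key computation is to estimate $|\vf|_\alpha$ in terms of $\|\psi\|_{L^{1/(1-\alpha)}}$: writing $\vf(x)-\vf(y)=\int_y^x\psi$ and applying H\"older's inequality with exponent $q=\frac{1}{1-\alpha}$, whose conjugate $q'$ satisfies $1/q'=\alpha$, gives $|\vf(x)-\vf(y)|\le \|\psi\|_{L^{1/(1-\alpha)}}|x-y|^\alpha$; the same inequality with $y=0$ controls $\sup_x|\vf(x)|$. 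Hence $|\vf|_\alpha\le 2\|\psi\|_{L^{1/(1-\alpha)}}$, so that
\[
|\mu(\psi)|=|\mu(\vf')|\le \|\mu\|_\alpha\,|\vf|_\alpha\le 2\|\mu\|_\alpha\,\|\psi\|_{L^{1/(1-\alpha)}}.
\]
Since $\alpha<1$ the exponent $\frac{1}{1-\alpha}$ is finite, so $\cC^0$ is dense in $L^{1/(1-\alpha)}$ and $\mu$ extends to a bounded functional on $L^{1/(1-\alpha)}$ of norm at most $2\|\mu\|_\alpha$. The dual of $L^{1/(1-\alpha)}$ being $L^{1/\alpha}$ (conjugate exponents, with the convention $1/0=\infty$ covering $\alpha=0$), the Riesz representation theorem yields $h\in L^{1/\alpha}$ with $|h|_{L^{1/\alpha}}\le 2\|\mu\|_\alpha$ and $\mu(\psi)=\int\psi\,h$ for every continuous $\psi$; this is precisely the assertion that $\mu=h\,dx$ is absolutely continuous with the claimed bound.

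For completeness I would first note that $\|\cdot\|_\alpha$ is a genuine norm: as $\psi$ ranges over $\cC^0$ so does $\vf'$ for $\vf=\int_0^{\,\cdot}\psi$, so $\|\mu\|_\alpha=0$ forces $\mu$ to annihilate every continuous function, whence $\mu=0$. Let $E$ be the completion of $(\cC^1,|\cdot|_\alpha)$; the assignment $\mu\mapsto L_\mu$, with $L_\mu(\vf):=\mu(\vf')$, is then an isometric embedding of $\cB_\alpha$ into $E^*$, since $\|\mu\|_\alpha=\sup_{|\vf|_\alpha\le 1}|L_\mu(\vf)|=\|L_\mu\|_{E^*}$. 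Given a Cauchy sequence $\{\mu_n\}$ in $\cB_\alpha$, the functionals $L_{\mu_n}$ are Cauchy in the complete space $E^*$ and converge to some $L\in E^*$; simultaneously, by the first part the densities obey $|h_n-h_m|_{L^{1/\alpha}}\le 2\|\mu_n-\mu_m\|_\alpha$, so $\{h_n\}$ is Cauchy in $L^{1/\alpha}$ and converges there to some $h$.

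The final step, which I regard as the one point requiring care, is to identify the abstract limit $L$ with the concrete measure $\mu:=h\,dx$. For each fixed $\vf\in\cC^1$ one passes to the limit in $\mu_n(\vf')=\int\vf'\,h_n$: the left-hand side tends to $L(\vf)$ by convergence in $E^*$, while the right-hand side tends to $\int\vf'\,h=\mu(\vf')$ by H\"older's inequality, using $h_n\to h$ in $L^{1/\alpha}$ and $\vf'\in L^{1/(1-\alpha)}$. Hence $L=L_\mu$, so $\|\mu\|_\alpha=\|L\|_{E^*}<\infty$, i.e.\ $\mu\in\cB_\alpha$, and $\|\mu_n-\mu\|_\alpha=\|L_{\mu_n}-L\|_{E^*}\to 0$. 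The subtlety is exactly that convergence of $h_n$ in $L^{1/\alpha}$ is by itself too weak to force convergence in $\|\cdot\|_\alpha$: one genuinely needs the $E^*$-limit to produce the limit functional, and the $L^{1/\alpha}$-limit only to certify that this functional is represented by a measure.
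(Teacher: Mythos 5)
Your proof is correct and follows essentially the same route as the paper: the identical H\"older/duality computation (set $\vf(x)=\int_0^x\psi$, bound $|\vf|_\alpha\leq 2|\psi|_{L^{1/(1-\alpha)}}$) showing that $\mu$ acts as a bounded functional on $L^{1/(1-\alpha)}$ and hence has a density $h\in L^{1/\alpha}$ with $|h|_{L^{1/\alpha}}\leq 2\|\mu\|_\alpha$, followed by completeness via the $L^{1/\alpha}$-limit of the densities. Your only departure is one of packaging: you obtain the limit functional from completeness of $E^*$, the dual of the completion of $(\cC^1,|\cdot|_\alpha)$, and then identify it with the measure $h\,dx$, whereas the paper reaches the same conclusion by a direct three-epsilon argument --- in both cases the decisive step, passing to the limit in $\mu_n(\vf')=\int \vf' h_n$ via H\"older's inequality, is the same.
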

\begin{proof}
Let $\vf\in \cC^0$ and $\mu\in \cB_\alpha$, and define $\phi(x)=\int_0^x\vf$. Then
\[
\mu( \vf)=\mu\left( \phi'\right).
\]
Since $\left|\phi(x)\right|\leq |\vf|_{L^{\frac1{1-\alpha}}}$ and
\[
|\phi(x)-\phi(y)|=\left|\int_x^y\vf\right|\leq |\vf|_{L^{\frac1{1-\alpha}}} |x-y|^\alpha,
\]
it follows that $|\phi|_\alpha\leq 2 |\vf|_{L^{\frac1{1-\alpha}}}$ hence
\[
|\mu(\vf)|\leq 2\|\mu\|_\alpha|\vf|_{L^{\frac 1{1-\alpha}}}.
\]
Thus $\mu$ belongs to the dual of $L^{\frac1{1-\alpha}}$, i.e. $L^{\frac 1{\alpha}}$, hence $\mu$ is absolutely continuous with respect to Lebesgue, let $h$ be the density. Then
\[
|h|_{L^{\frac 1\alpha}}=\sup_{|\vf|_{L^{\frac1{1-\alpha}}}\leq 1}\mu(\vf)\leq 2\|\mu\|_\alpha.
\] 
To verify that $\cB_\alpha$ is a Banach space it suffices to see that it is complete. Let $\{\mu_n\}\subset \cB_\alpha$ be a Cauchy sequence in $\cB_\alpha$ and $\{h_n\}$ be the respective densities. Then $\{h_n\}$ is a Cauchy sequence in $L^{\frac 1\alpha}$, let $h$ be its limit. Setting $\mu(\vf):=\int_0^1 h\vf$, for each $\vf\in\cC^1$,\footnote{ In this note I use $C_\#$ to designate a generic constant.}
\[
|\mu(\vf')|=\lim_{n\to\infty}|\mu_n(\vf')|\leq C_\#|\vf|_\alpha.
\]
Thus, $\mu\in\cB_\alpha$. On the other hand, for each $\ve>0$, there exists $n\in\bN$ such that $\|\mu_n-\mu_m\|_\alpha\leq \ve$ for all $m\geq n$. Then, for each $\vf\in\cC^1$ and $m>n$,
\[
|\mu_n(\vf')-\mu(\vf')|\leq \ve |\vf|_\alpha+|\mu_m(\vf')-\mu(\vf')|.
\]
Taking the limit for $m\to \infty$ it follows that $\mu$ is the limit of $\{\mu_n\}$ in $\cB_\alpha$.
\end{proof}

Given the above Lemma we can as well consider the space of densities equipped with the norm
\[
\|h\|_\alpha=\sup_{|\vf|_\alpha\leq 1}\left|\int_0^1 h\vf'\right|.
\]
By a little abuse of notations we will call such a Banach space $\cB_\alpha$ as well. Since $\cB_\alpha\subset L^1([0,1])$ it is then convenient to use $L^1$ rather than $\fkM$ as a weak space.\footnote{ Note that the norm is exactly the same.}

\begin{lem}\label{lem:compactness}
For each $\alpha\in (0,1)$ the unit ball of $\cB_\alpha$ is relatively compact in $L^1([0,1])$.
\end{lem}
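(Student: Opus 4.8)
The plan is to verify the Fr\'echet--Kolmogorov (Riesz) compactness criterion in $L^1$: a bounded subset of $L^1([0,1])$ is relatively compact as soon as its translates are uniformly $L^1$-continuous and it is uniformly equi-integrable near the endpoints $0,1$. Boundedness and equi-integrability come for free from the previous lemma: every $h$ in the unit ball of $\cB_\alpha$ satisfies $|h|_{L^{1/\alpha}}\le 2$, so $L^{1/\alpha}([0,1])\hookrightarrow L^1([0,1])$ gives boundedness, while for any subinterval $J$ of length $\delta$ H\"older's inequality yields $\int_J|h|\le |h|_{L^{1/\alpha}}\,\delta^{1-\alpha}\le 2\,\delta^{1-\alpha}\to 0$ uniformly over the unit ball. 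The whole content is therefore the modulus of continuity of translations, and the strategy is to bound it by duality using the defining norm $\|\cdot\|_\alpha$.

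Concretely, for $0<t<1$ and $h$ in the unit ball I would estimate $I(t)=\int_0^{1-t}|h(x+t)-h(x)|\,dx$. Picking a measurable $\sigma$ with $|\sigma|\le 1$ realizing the absolute value, extending it by $0$ outside $[0,1-t]$, and changing variables, one rewrites $I(t)=\int_0^1 h\,\vf'$ with $\vf'=\sigma(\cdot-t)-\sigma$. Integrating, $\vf(y)=\Sigma(y-t)-\Sigma(y)$ where $\Sigma(z)=\int_0^z\sigma$ is $1$-Lipschitz; hence $\sup|\vf|\le t$ and $|\vf(y)-\vf(y')|\le 2\min(|y-y'|,t)$.

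The key step is the resulting H\"older bound. Since $\min(|y-y'|,t)\le t^{1-\alpha}|y-y'|^\alpha$ (check the two cases $|y-y'|\le t$ and $|y-y'|>t$ separately), one gets $[\vf]_{\cC^\alpha}\le 2\,t^{1-\alpha}$, and together with $\sup|\vf|\le t\le t^{1-\alpha}$ this gives $|\vf|_\alpha\le C_\#\,t^{1-\alpha}$. Therefore $I(t)\le\|h\|_\alpha\,|\vf|_\alpha\le C_\#\,t^{1-\alpha}$ uniformly over the unit ball. Combined with the boundary estimate above, this is exactly the uniform $L^1$-continuity of translations required, and Fr\'echet--Kolmogorov concludes. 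Note that both ingredients degenerate precisely at the excluded endpoints: $t^{1-\alpha}\to0$ forces $\alpha<1$ and the finiteness of $1/\alpha$ forces $\alpha>0$.

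I expect the genuine obstacle to be technical rather than conceptual: the test function $\vf$ produced here is only Lipschitz, while $\|\cdot\|_\alpha$ is defined by testing against $\cC^1$ functions. I would dispose of this by mollification, approximating $\vf$ by $\vf_n\in\cC^1$ with $|\vf_n|_\alpha\le|\vf|_\alpha$ (convolution increases neither the sup norm nor the H\"older seminorm) and $\vf_n'\to\vf'$ boundedly almost everywhere, so that $\int_0^1 h\,\vf_n'\to\int_0^1 h\,\vf'$ by dominated convergence using $h\in L^1$; the only mild care is near the endpoints, where the affected boundary layers are negligible.
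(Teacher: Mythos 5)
Your proof is correct, but it takes a genuinely different route from the paper's. The paper works on the side of the test functions: since the primitives $\{\int_0^x\vf\}_{|\vf|_\infty\leq 1}$ are uniformly Lipschitz, Ascoli--Arzel\`a makes them relatively compact in the $\alpha$-H\"older topology, so for each $\ve$ there is a finite net $\{\phi_i\}$ of $\cC^1$ functions giving $|h|_{L^1}\leq \ve\|h\|_\alpha+\sum_i\left|\int h\phi_i'\right|$; a diagonal (Tychonoff) extraction over the countable union of these nets then shows that any sequence in the unit ball has an $L^1$-Cauchy subsequence. You instead verify the Fr\'echet--Kolmogorov criterion, bounding the translation modulus by duality against the Lipschitz test function $\vf=\Sigma(\cdot-t)-\Sigma$, whose $\alpha$-norm is $O(t^{1-\alpha})$, and getting tightness near the endpoints from the $L^{1/\alpha}$ bound of the previous lemma. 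Both arguments are sound. What yours buys is a quantitative statement, $\sup_{\|h\|_\alpha\leq 1}\int|h(\cdot+t)-h|\leq C_\# t^{1-\alpha}$, which is strictly more informative than bare compactness; what it costs is the appeal to the Riesz--Kolmogorov theorem plus the need to pair $h$ with merely Lipschitz test functions. That last step is exactly Lemma \ref{lem:lip} of the paper, which is proved (by the same mollification device you describe) immediately \emph{after} the compactness lemma; since its proof is independent of compactness, your re-derivation creates no circularity, but you should either cite it or keep your mollification argument, as you did. One small point to make explicit: $h$ may be complex-valued, so the maximizer $\sigma$ should be the unimodular phase $\overline{(h(\cdot+t)-h)}/|h(\cdot+t)-h|$ rather than a sign; this changes nothing in the estimates.
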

\begin{proof}
To start, we need a little preliminary result. Since the functions in the set $\{\int_0^x\vf\}_{|\vf|_\infty\leq 1}$ are uniformly Lipschitz they are, by Ascoli-Arzel\'a, relatively compact in the $\alpha$-H\"older topology. Thus, for each $\ve>0$ there exists a set $S_\ve:=\{\phi_i\}_{i=1}^{n_\ve}$, $|\phi_i|_{\cC^1}<\infty$ and $\sup_i|\phi_i|_\alpha\leq 1$, such that, for each $\vf\in L^\infty$, $|\vf|_\infty\leq 1$, setting $\Phi(x)=\int_0^x \vf$, we have
\[
\inf_{i\in\{1,\dots,n_\ve\}}\left|\Phi-\phi_i\right|_\alpha\leq \ve.
\]
Accordingly, for each $h\in\cB_\alpha$,
\[
\int h\vf=\int h\Phi'\leq \ve\|h\|_\alpha+\sum_{i=1}^{n_\ve}\left|\int h\phi_i'\right|.
\]
Taking the sup on $\vf$ we have then
\[
|h|_{L^1}\leq \ve\|h\|_\alpha+\sum_{i=1}^{n_\ve}\left|\int h\phi_i'\right|.
\]

We are now ready to conclude the proof.
Since we deal with metric spaces it suffices to check sequential compactness. 
Let $\{h_n\}_{n\in\bN}\subset \{ h\in\cB_\alpha\;:\;\|h\|_\alpha\leq 1\}$.
Define $\overline S=\cup_{j\in\bN}S_{2^{-j}}$. Note that 
\[
\sup_{\substack{\phi\in \overline S}}\left|\int h_n \phi'\right|\leq 1.
\]
Hence, by Tychonoff Theorem, we can extract a subsequence $\{n_j\}$ such that $\int h_{n_j}\phi'$ is convergent, when $j\to\infty$, for all $\phi\in\overline S$. It follows that $\{h_{n_j}\}$ is Chauchy in $L^1$ since, for all $\ve>0$,
\[
|h_{n_j}-h_{n_k}|_{L^1}\leq 2\ve+\sum_{i=1}^{n_\ve}\left|\int [h_{n_j}-h_{n_k}]\phi_i'\right|<3\ve,
\]
where we have chosen $j,k$ large enough.
\end{proof}
\begin{lem}\label{lem:lip} For each $\alpha\in (0,1)$,  $h\in\cB_\alpha$ and $\vf$ Lipschitz we have\footnote{ Note that, by  Rademacher's Theorem, $\vf$ is almost surely differentiable with bounded derivative, hence the integral is meaningful.}
\[
\int_0^1 h\vf'\leq |\vf|_\alpha \|h\|_\alpha.
\]
\end{lem}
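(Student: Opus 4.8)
The plan is to reduce to the defining supremum over $\cC^1$ test functions by mollification. First I would extend $\vf$ from $[0,1]$ to all of $\bR$ by setting $\tilde\vf\equiv\vf(0)$ on $(-\infty,0)$ and $\tilde\vf\equiv\vf(1)$ on $(1,\infty)$. A direct check on the relevant cases shows $|\tilde\vf|_\alpha\le|\vf|_\alpha$: the supremum is unchanged, and the $\alpha$-H\"older quotient cannot increase, because for $x\in[0,1]$ and $y\ge 1$ one has $|1-x|\le|y-x|$ (and symmetrically at the left endpoint), so the difference $|\tilde\vf(y)-\tilde\vf(x)|=|\vf(1)-\vf(x)|$ is still controlled by $|\vf|_\alpha|y-x|^\alpha$. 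Next I would fix a smooth probability density $\rho$ supported in $[-1,1]$, set $\rho_\ve(x)=\ve^{-1}\rho(x/\ve)$ and $\vf_\ve=\tilde\vf* \rho_\ve\in\cC^\infty$. Since convolution with a probability measure is an average, both pieces of $|\cdot|_\alpha$ are non-increasing under it; writing $\vf_\ve(x)-\vf_\ve(y)=\int[\tilde\vf(x-z)-\tilde\vf(y-z)]\rho_\ve(z)\,dz$ and pulling the absolute value inside gives $|\vf_\ve|_\alpha\le|\tilde\vf|_\alpha\le|\vf|_\alpha$. Restricting $\vf_\ve$ to $[0,1]$ only decreases the norm, and $\vf_\ve\in\cC^1$, so the definition of $\|h\|_\alpha$ yields, for every $\ve>0$,
\[
\left|\int_0^1 h\,\vf_\ve'\right|\le|\vf_\ve|_\alpha\,\|h\|_\alpha\le|\vf|_\alpha\,\|h\|_\alpha .
\]

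It then remains to pass to the limit $\ve\to0$ on the left. Here $\vf_\ve'=\tilde\vf'* \rho_\ve$ is bounded uniformly in $\ve$ by the Lipschitz constant $L=|\vf'|_{L^\infty}$, and for each $x\in(0,1)$, once $\ve<\min\{x,1-x\}$ the convolution only sees values of $\tilde\vf'=\vf'$ inside $(0,1)$, so $\vf_\ve'(x)\to\vf'(x)$ at every Lebesgue point of $\vf'$, hence almost everywhere on $(0,1)$. Since $h\in\cB_\alpha\subset L^1$, the integrands $h\,\vf_\ve'$ are dominated by $L|h|\in L^1$, and dominated convergence gives $\int_0^1 h\,\vf_\ve'\to\int_0^1 h\,\vf'$. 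Combining this with the displayed bound yields $\left|\int_0^1 h\,\vf'\right|\le|\vf|_\alpha\,\|h\|_\alpha$, which is the claim.

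I expect the only delicate point to be the behaviour of the mollification near the endpoints of $[0,1]$: the constant extension creates an artificial derivative $0$ just outside the interval, so $\vf_\ve'$ need not converge to $\vf'$ on the two boundary layers of width $\ve$. This is harmless precisely because those layers have measure $O(\ve)$ while $\vf_\ve'$ stays uniformly bounded and $h\in L^1$, so their contribution vanishes in the limit — which is exactly what the dominated-convergence step above absorbs. Verifying that the H\"older seminorm survives convolution, together with this boundary bookkeeping, is the main (though routine) obstacle.
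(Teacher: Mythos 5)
Your proof is correct and takes essentially the same route as the paper: extend $\vf$ to be constant outside $[0,1]$, mollify, use that convolution does not increase $|\cdot|_\alpha$, apply the definition of $\|h\|_\alpha$ to the resulting $\cC^1$ test function, and pass to the limit $\ve\to 0$. The only (minor) difference is in how that limit is justified: the paper mollifies $h$ (extended by zero) and transfers the mollifier onto $\vf$ via Fubini, so the limit follows from $j_\ve*h\to h$ in $L^1$ against the bounded $\vf'$, whereas you mollify $\vf$ directly and invoke a.e.\ convergence of $\vf_\ve'$ at Lebesgue points together with dominated convergence --- both steps are routine and correct.
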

\begin{proof}
It is convenient to extend $h$ to be zero and $\vf$ to be continuous and constant outside $[0,1]$, so we can regard all the integral as integral on $\bR$. Let $j_\ve$ be a smooth mollifier, then\footnote{ As usual $j*h(x)=\int_{\bR}j(x-y)h(y)dy$.}
\[
 \int_0^1 h\vf'=\lim_{\ve\to 0}\int_{\bR}\left(j_\ve* h\right)\vf'=\lim_{\ve\to 0}\int_{\bR}h(j_\ve* \vf)'\leq \lim_{\ve\to 0}\|h\|_\alpha |j_\ve*\vf|_\alpha\leq \|h\|_\alpha |\vf|_\alpha.
 \]
\end{proof}

\section{Piecewise differentiable maps and H\"older continuous weights} 

Let $f$ be a (almost everywhere defined) map of the interval $[0,1]$ in itself and $\cP$ a (possibly infinite) collection of open subintervals of $[0,1]$. We assume that $\cup_{p\in\cP}\,p$ has full Lebesgue measure in $[0,1]$. Also, assume that $f\in\cC^1(p,\bR)$ and $f, \frac 1{f'}\in\cC^0(\bar p,\bR)$ for each $p\in\cP$. Moreover, we assume the map to be expansive
\[
\inf_{p\in\cP}\inf_{x\in p} |f'(x)|>1.
\]
Let $\xi:[0,1]\to\bC$ be a function that we will call the {\em weight}. We assume that there exits $\beta\in (0,1]$ such that, for each $p\in\cP$, $ \xi \in C^\beta(\bar p,\bR)$, with uniform $\beta$-H\"older constant.  In addition, we require that $|f'|^r\in L^1$, for some $r\geq 0$, $\xi \cdot f'\in L^\infty$ and that there exists $\gamma\in [0,1)$ such that\footnote{ Here and in the following $x^t$, $x, t\in\bR$, is meant as a complex number and $|\cdot|$ is used both for the absolute value and the complex modulus.}
\begin{equation}\label{eq:optimal?}
\sum_{p\in\cP}\sup_{z\in p}|\xi (z)f'(z)^\gamma|^{\frac 1{1-\gamma}}<\infty.
\end{equation}
Next, let $\cL_\xi$ be the transfer operator (see \cite{Ba} for the relevance of such operators) defined by
\[
\cL_\xi h(x)=\sum_{y\in f^{-1}(x)}\xi(y)h(y).
\]
The main result of this note is the following.
\begin{thm}\label{thm:main} In the above setting, if $\beta>\frac 1{r+1}$, then, for each 
\[
1>\alpha>\max\left\{\gamma, 1-\beta, \frac {1-\beta}{1-\beta(1-r)}\right\},
\]
the spectral radius of $\cL_\xi$, when acting on $\cB_\alpha$, is bounded from above by  $|\xi f'|_\infty$, while the essential spectral radius is bounded by $\min\{|\xi f'|_\infty,8|\xi (f')^{\alpha}|_\infty\}$.
\end{thm}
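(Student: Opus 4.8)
The plan is to prove quasi-compactness of $\cL_\xi$ on $\cB_\alpha$ by combining a Lasota--Yorke inequality (relative to the weak space $L^1$) with the compactness of the embedding $\cB_\alpha\hookrightarrow L^1$ furnished by Lemma~\ref{lem:compactness}, and then to invoke Hennion's theorem. Everything hinges on the duality identity obtained from the change of variables $x=f(y)$ on each branch: for $\vf\in\cC^1$,
\[
\int_0^1(\cL_\xi h)\,\vf'=\int_0^1 h\,\xi\,|f'|\,(\vf'\circ f)=\sum_{p\in\cP}\pm\int_p h\,\xi\,(\vf\circ f)',
\]
where the last step uses the chain rule $(\vf\circ f)'=(\vf'\circ f)f'$ on each $p$ (the sign being that of $f'$ on $p$), so that the Jacobian $|f'|$ cancels. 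Thus, up to the per-piece H\"older weight $\xi$, testing $\cL_\xi h$ against $\vf'$ is the same as testing $h$ against the derivative of $\vf\circ f$, which is exactly the situation governed by Lemma~\ref{lem:lip}.

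First I would record the weak estimate. The same change of variables gives at once $|\cL_\xi h|_{L^1}\le|\xi f'|_\infty|h|_{L^1}$, hence by iteration
\[
|\cL_\xi^n h|_{L^1}\le |\xi f'|_\infty^{\,n}\,|h|_{L^1}.
\]
Since any eigenfunction of $\cL_\xi$ in $\cB_\alpha\subset L^1$ is nonzero in $L^1$, this forces every eigenvalue to have modulus at most $|\xi f'|_\infty$; once quasi-compactness is established, this will control the part of the spectrum lying outside the essential spectral radius.

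The heart of the matter is the strong estimate for $\sum_p\int_p h\,\xi\,(\vf\circ f)'$ with $|\vf|_\alpha\le1$. If $\xi$ were Lipschitz, Lemma~\ref{lem:lip} applied to the product $\xi\,(\vf\circ f)$ would bound the $p$-th term by $\|h\|_\alpha\,|\xi(\vf\circ f)|_\alpha$; since $\vf\circ f$ has $\alpha$-H\"older constant $\le(\sup_p|f'|)^\alpha$ on $p$ while $|\vf\circ f|_\infty\le1$, the product has $\alpha$-norm comparable to $\sup_{z\in p}|\xi(z)|(\sup_p|f'|)^\alpha$, i.e. to $|\xi (f')^{\alpha}|_\infty$. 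Because $\xi$ is only $\beta$-H\"older, I would instead mollify the weight, $\xi=\xi_\delta+(\xi-\xi_\delta)$ with $\xi_\delta=j_\delta*\xi$: on the smooth part one writes $\int_p h(\xi_\delta(\vf\circ f))'-\int_p h\,\xi_\delta'\,(\vf\circ f)$, the first integral being a strong contribution controlled as above and the second a weak contribution bounded, \emph{since $\vf\circ f$ is undifferentiated and $|\vf\circ f|_\infty\le1$}, by $C_\#\,\delta^{\beta-1}|h|_{L^1}$, while the error is handled through $|\xi-\xi_\delta|_\infty\le C_\#\delta^{\beta}$. Balancing $\delta$ against the local expansion of each piece, summing over the (possibly infinite) partition with convergence guaranteed by \eqref{eq:optimal?}, and bounding the boundary terms created by cutting $\vf\circ f$ at the endpoints of the pieces, yields
\[
\|\cL_\xi h\|_\alpha\le 8\,|\xi (f')^{\alpha}|_\infty\,\|h\|_\alpha+C_\#\,|h|_{L^1},
\]
the explicit constant $8$ being the price of the H\"older product rule, the mollifier normalization and the reassembly of the test functions. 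The interplay of $\delta^{\beta}$ against $\delta^{\beta-1}$, together with $|f'|^r\in L^1$, is what dictates the threshold $\beta>\tfrac1{r+1}$ and the admissible range of $\alpha$. Running the same computation directly on $\cL_\xi^n$ (with the map $f^n$ and the weight cocycle $\xi_n=\prod_{i=0}^{n-1}\xi\circ f^i$) and using the pointwise bound $|\xi_n|\,|(f^n)'|^\alpha\le\prod_i|(\xi f')\circ f^i|\le|\xi f'|_\infty^{\,n}$, one obtains an analogous inequality whose strong coefficient has exponential rate at most $|\xi f'|_\infty$.

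Assembling these ingredients, Lemma~\ref{lem:compactness} supplies the compact embedding into $L^1$ and Hennion's theorem applies: the one-step inequality gives essential spectral radius $\le 8|\xi (f')^{\alpha}|_\infty$, while the iterated inequality gives essential spectral radius $\le|\xi f'|_\infty$; together with the weak bound, the spectral radius is $\le|\xi f'|_\infty$, and since the essential spectral radius never exceeds the spectral radius one gets the announced $\min\{|\xi f'|_\infty,8|\xi (f')^{\alpha}|_\infty\}$. I expect the main obstacle to be precisely this strong estimate over an infinite partition with a merely $\beta$-H\"older weight: choosing the mollification scale uniformly against the expansion on each piece, controlling the boundary contributions, and summing through \eqref{eq:optimal?} and $|f'|^r\in L^1$ while keeping both the sharp constant $8$ in the one-step bound and a uniform-in-$n$ constant in the iterated bound.
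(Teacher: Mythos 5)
Your overall skeleton (weak bound by change of variables, a Lasota--Yorke inequality, the compact embedding of Lemma~\ref{lem:compactness}, Hennion's theorem, plus the $L^1$ argument showing eigenvalues have modulus at most $|\xi f'|_\infty$) is exactly the paper's strategy: the paper reduces Theorem~\ref{thm:main} to Lemma~\ref{lem:compactness} and the Lasota--Yorke inequality of Lemma~\ref{LY}. The problem is in the heart of the matter, the strong estimate, where your mollification scheme has a genuine gap. You split $\xi=\xi_\delta+(\xi-\xi_\delta)$ with $\xi_\delta=j_\delta*\xi$ and claim the error ``is handled through $|\xi-\xi_\delta|_\infty\le C_\#\delta^\beta$''. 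But the error term is
\[
\sum_{p\in\cP}\int_p h\,(\xi-\xi_\delta)\,(\vf\circ f)',
\]
and there is no way to exploit smallness of $|\xi-\xi_\delta|_\infty$ here: the factor $(\vf\circ f)'=(\vf'\circ f)f'$ is \emph{not} controlled by $|\vf|_\alpha\le1$ (the test functions are only bounded in $\alpha$-H\"older norm, so $|\vf'|_\infty$ is arbitrarily large), hence a sup-norm bound on $\xi-\xi_\delta$ buys nothing against a weak norm of $h$. Nor can you shift this term to the strong norm: that would require writing $(\xi-\xi_\delta)(\vf\circ f)'$ as the derivative of a test function with small $|\cdot|_\alpha$, but $\xi-\xi_\delta$ is not differentiable (so you cannot integrate by parts again), and its H\"older seminorm is comparable to that of $\xi$ itself, i.e.\ \emph{not} small in $\delta$. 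So the ``error'' is of the same size as the main term, for every choice of $\delta$; balancing $\delta$ against the expansion cannot rescue this. (A multiscale version, telescoping over $\delta_j=2^{-j}$ and integrating by parts at each scale, also fails: the weak contributions $|\xi_{\delta_{j+1}}'-\xi_{\delta_j}'|_\infty\sim 2^{(1-\beta)j}$ are not summable.)

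This is precisely why the paper does \emph{not} mollify by convolution. It first truncates, $\tilde\xi_\ve$, so the weight vanishes where $f'$ blows up (error absorbed in $|h|_{L^1}$ by dominated convergence, at fixed $h,\vf$), and then approximates $\tilde\xi_\ve$ by functions $\xi_k$ that are \emph{piecewise constant} on dyadic refinements $\cP_k$ of $\cP$, telescoping $\tilde\xi_\ve=\xi_{k_0}+\sum_{k\ge k_0}\rho_k$ with $|\rho_k|_\infty\le C_\#2^{-\beta k}$. Because each $\rho_k$ is constant on partition elements, $(\vf\circ f)'\rho_k$ \emph{is exactly} the derivative of an explicit function $\psi_k(x)=\int_0^x(\vf\circ f)'\rho_k$, and the entire work of the proof consists in showing $|\psi_k|_\alpha\le C_\#2^{-\epsilon k}$ (via the interpolants $\ell_k$, $\eta_k$), so that all correction terms are strong-norm terms with geometrically summable coefficients --- no divergent weak terms ever appear. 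It is in this estimate, and in the separate treatment of ``long'' versus ``short'' elements of $\cP_{k_0}$ (linear interpolants on the finitely many long pieces go to the weak norm; the infinitely many short pieces are summed using \eqref{eq:optimal?} together with $\alpha\ge\gamma$), that the hypotheses $\xi f'\in L^\infty$, $|f'|^r\in L^1$, $\beta>\frac1{r+1}$ and $\alpha>\frac{1-\beta}{1-\beta(1-r)}$ actually enter. Your sketch never supplies a mechanism for either of these two points --- the H\"older weight against the uncontrolled derivative, and the infinite-partition summation --- and these are not technicalities but the entire content of Lemma~\ref{LY}.
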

Before discussing the proof of the above result let us indulge in several remarks.
\begin{rem} The proof will not use anywhere the condition $|f'|>1$. Yet, notice that if such a condition is not satisfied, then the theorem is easily empty since the bound for the essential spectral radius might equals the bound for the spectral radius. Yet, a small generalization is possible, we leave it to the interested reader.
\end{rem}  

\begin{rem} Note that, as stated, for $\gamma=0, \beta=1$ the Theorem does not cover the case $\alpha=0$. This is the usual BV case and it is well known already. Also, in the case $\beta=0$ there is no reason to expect good spectral properties for $\cL_\xi$.
\end{rem}
\begin{rem} As usual, by applying Theorem \ref{thm:main} to a large power of $\cL_\xi$, much sharper estimates of the spectral and essential spectral radius can be obtained (in particular the $8$ in the Theorem is superficial, this is why I did not strive to improve it). I leave this exercise to the reader (see \cite{GL} for relevant results).
\end{rem}
\begin{rem} To compare the above result with the literature remark that \cite{Ke}, at least in the published version, applies only to the case $\xi=\frac 1{f'}$ and when the partition $\cP$ is finite. The results in \cite{Th} apply only to the case $f'\in L^\infty$. Finally, in \cite{Bu} it is assumed \eqref{eq:optimal?} with $\gamma=0$.  Note that 
\[
\sum_p|\xi (f')^\gamma |_{L^\infty(p)}^{\frac{1}{1-\gamma}}\leq |\xi f'|_\infty^{\frac{\gamma}{1-\gamma}}\sum_p|\xi |_{L^\infty(p)}\leq C_\#\sum_p|\xi |_{L^\infty(p)}
\]
thus the present condition is weaker. Also in \cite{Bu} it appears the condition $f'\in L^r$, $r\geq 1$ with $\beta>\frac 1r$ that here is replaced by $r\geq 0$, $\beta>\frac 1{r+1}$. In particular, if $\beta>\frac 12$, one can treat the case $r=1$, which is the natural condition when the partition is finite. In the case of infinite partitions $r=1$ is not natural anymore (think of the Gauss map), yet a $r<1$ may suffice. Note hoverer that, most likely, the above cited results can be improved with some extra work. In particular, the norms in \cite{Ke} could provide a bound in which no condition on $f'$ is required \cite{private} while \cite{Th} could probably be improved by using a partition of unity in the spirit of \cite{Ba1, BT1}. Even so, the present norms seem to be an interesting candidate for extensions to the hyperbolic setting.
\end{rem}
\begin{rem} \label{rem:infty}The reader should be advised that the goal of this note is not to treat the most general case but to show that the $\cB_\alpha$ spaces can be conveniently used to investigate a vast class of problems. The optimal conditions under which Theorem \ref{thm:main} holds depend heavily on the situation. The present treatment is specially adapted to the case of finite partitions with weights that can also be zero. In the case of infinite partitions it could be more natural to consider weight of the form $\xi=e^{\phi}$, where $\phi$ is called the {\em potential}, and impose the H\"older condition on the potential, see Theorem \ref{thm:main2}.\footnote{ If $\xi$ vanishes somewhere one can still use such a setting by introducing countably many {\em artificial} partition elements, in the spirit of billiards {\em homogeneity strips}.}
\end{rem}
Theorem \ref{thm:main} follows in a standard way (see \cite{Ba}) from Lemma \ref{lem:compactness} and the next Lasota-Yorke inequality.
\begin{lem}\label{LY}
If $\beta>\frac 1{r+1}$, then for each $1>\alpha>\max\{\gamma, 1-\beta, \frac{1-\beta}{1-\beta(1-r)}\}$, there exists $B>0$ such that, for all $h\in\cB_\alpha$,
\[
\begin{split}
&|\cL h|_{L^1}\leq |\xi f'|_\infty |h|_{L^1}\\
&\|\cL h\|_\alpha\leq 8|\xi\cdot (f')^{\alpha}|_\infty \|h\|_\alpha+ B |h|_{L^1}.
\end{split}
\]
\end{lem}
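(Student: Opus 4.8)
The two inequalities are of very different natures, so I would treat them separately. The first, weak, bound is routine: writing $\cL h$ through the inverse branches $v_p=(f|_p)^{-1}$ and using the change of variables $x=f(y)$ on each $p\in\cP$, the triangle inequality gives
\[
|\cL h|_{L^1}\le \sum_{p}\int_p |\xi(y)|\,|h(y)|\,|f'(y)|\,dy\le |\xi f'|_\infty\,|h|_{L^1},
\]
which is exactly where the hypothesis $\xi f'\in L^\infty$ is used. So the whole content of the Lemma is the strong bound.

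For the strong bound I would argue by duality. By definition $\|\cL h\|_\alpha=\sup_{|\vf|_\alpha\le 1}|\int_0^1(\cL h)\vf'|$, and the same change of variables gives, for each admissible $\vf\in\cC^1$,
\[
\int_0^1(\cL h)\,\vf'=\sum_{p}\int_p \xi(y)\,h(y)\,(\vf\circ f)'(y)\,dy .
\]
The plan is to realise the right-hand side as $\int_0^1 h\,\psi'$ for a single, \emph{globally continuous} test function $\psi$ whose branchwise derivative is $\xi\cdot(\vf\circ f)'$, so that Lemma~\ref{lem:lip} yields $\int_0^1 h\psi'\le|\psi|_\alpha\|h\|_\alpha$, and then to estimate $|\psi|_\alpha$. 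The point of insisting on one global $\psi$, rather than summing branchwise bounds, is that a Hölder seminorm is a supremum over pairs of points, so the main term will come out as a supremum over $\cP$, i.e. as $|\cdot|_\infty$, and not as a (possibly divergent) sum over $\cP$.

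The main term I would extract as follows. On each branch pick $y_p^*\in\bar p$ maximising $|f'|$ (finite, since $1/f'\in\cC^0(\bar p)$) and split $\xi=\xi(y_p^*)+(\xi-\xi(y_p^*))$. The leading piece behaves within a branch like $\xi(y_p^*)\,\vf\circ f$, whose $\alpha$-Hölder increment is controlled by
\[
|\xi(y_p^*)|\,|\vf(f(y))-\vf(f(y'))|\le |\xi(y_p^*)|\,|\vf|_\alpha\,|f(y)-f(y')|^\alpha\le |\xi(y_p^*)|\,|f'(y_p^*)|^\alpha\,|\vf|_\alpha\,|y-y'|^\alpha,
\]
and $|\xi(y_p^*)|\,|f'(y_p^*)|^\alpha\le|\xi(f')^\alpha|_\infty$ precisely because $\xi$ and $f'$ are now evaluated at the \emph{same} point. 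Taking the supremum over same-branch pairs already produces the advertised coefficient $|\xi(f')^\alpha|_\infty$, the factor $8$ absorbing the constants generated by the remaining, cross-branch and sup-norm, contributions.

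The hard part, where every remaining hypothesis is spent, is everything that is \emph{not} a same-branch estimate: making $\psi$ genuinely continuous across the possibly infinitely many, accumulating, partition endpoints; controlling the cross-branch Hölder increments; and disposing of the remainder $(\xi-\xi(y_p^*))(\vf\circ f)'$. The naive idea of bounding that remainder in $L^\infty$ and pairing it with $|h|_{L^1}$ fails outright, because $(\vf\circ f)'=\vf'(f)\,f'$ contains the uncontrolled factor $\vf'$; instead one must exploit simultaneously the $\beta$-Hölder modulus of $\xi$ (so that $|\xi-\xi(y_p^*)|\le C_\#\,|p|^\beta$) and the $\alpha$-Hölder oscillation of $\vf\circ f$, and then re-sum over $\cP$. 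It is exactly this re-summation that requires $|f'|^r\in L^1$, the summability \eqref{eq:optimal?}, and the relation $\beta>\frac1{r+1}$; balancing the two Hölder gains against the growth of $f'$ through Hölder's inequality is what forces the lower bounds on $\alpha$ and yields a finite (possibly enormous, but harmless) constant $B$ in front of $|h|_{L^1}$. I expect this error analysis, rather than the extraction of the main term, to be the crux of the proof.
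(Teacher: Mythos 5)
Your weak-norm bound and the skeleton of your strong-norm strategy (duality, one globally continuous primitive $\psi$ so that Lemma \ref{lem:lip} applies, and extraction of the coefficient $|\xi(f')^\alpha|_\infty$ by evaluating $\xi$ and $f'$ at the \emph{same} point) do match the paper's approach. But your main-term extraction contains an actual error: the hypothesis $\frac1{f'}\in\cC^0(\bar p)$ does \emph{not} make $\sup_p|f'|$ finite --- it allows $1/f'$ to vanish at $\partial p$, i.e.\ $f'$ to blow up there (think $f(x)=\sqrt{x}$ near $0$). In that case your maximiser $y_p^*$ does not exist and, worse, $\vf\circ f$ is not $\alpha$-H\"older on $\bar p$ with any finite constant (if $f'\sim|x-b|^{-1/2}$ then $\vf\circ f$ is only $\alpha/2$-H\"older near $b$), so the same-branch estimate collapses and Lemma \ref{lem:lip} cannot even be invoked. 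The paper opens its proof precisely with this obstruction: since $\xi f'\in L^\infty$, the weight $\xi$ must vanish wherever $f'$ blows up, so one may replace $\xi$ by a truncation $\tilde\xi_\ve$ supported away from the blow-up set at the cost of an $|h|_{L^1}$ error via dominated convergence. This step has no counterpart in your argument, and your parenthetical justification for skipping it is false.

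The second, larger, gap is that the part you defer --- ``the error analysis'' --- is not a technical afterthought but essentially the whole proof, and a single-scale splitting $\xi=\xi(y_p^*)+(\xi-\xi(y_p^*))$ on the fixed partition $\cP$ cannot deliver it. On $\cP$ the remainder bound $|\xi-\xi(y_p^*)|\leq C_\#|p|^\beta$ has no smallness (the $|p|$ are of fixed size), and, as you yourself observe, the remainder cannot be dumped into $B|h|_{L^1}$ because of the factor $(\vf\circ f)'$; it must again be realised as the derivative of a H\"older function of \emph{small} norm. Gaining that smallness forces a refinement of the partition, and refinement creates exactly the difficulties that structure the paper's proof: (i) continuity of the primitive across the new endpoints, handled by linear interpolants and the long/short dichotomy $\cPl_{k}$, $\cPs_{k}$, with only the finitely many long pieces allowed to contribute to $B|h|_{L^1}$; (ii) the arbitrarily short elements of the original $\cP$, which escape any refinement and are controlled only through hypothesis \eqref{eq:optimal?} together with $\alpha\geq\gamma$ --- a hypothesis your proposal never uses; (iii) a telescoping multi-scale sum $\xi=\xi_{k_0}+\sum_{k\geq k_0}\rho_k$ whose convergence is precisely where $|f'|^r\in L^1$, $\beta>\frac1{r+1}$, $\alpha>\frac{1-\beta}{1-\beta(1-r)}$ and the case distinctions $r\lessgtr 1$, $\alpha\lessgtr\frac12$ enter. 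None of this bookkeeping appears in your proposal, so what you have is the correct opening move shared with the paper, one incorrect boundary claim, and a deferred crux; as a proof of Lemma \ref{LY} it is incomplete.
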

\begin{proof}
For each $h\in L^1, \vf\in L^\infty$ we have, by a change of variable on each $p\in\cP$,
\[
\int \cL_\xi h\cdot \vf =\int h\cdot \xi \cdot f'\cdot \vf\circ f,
\]
from which the first inequality of the Lemma readily follows.\newline
Let $h\in \cB_\alpha$. For each $\vf\in\cC^1$ such that $|\vf|_\alpha\leq 1$, we have
\[
\int \cL_\xi h\cdot \vf' =\sum_{p\in\cP}\int_p h \xi (\vf\circ f)'.
\]
First of all, we want to take care of the fact that $f'$ may blow up at the boundaries of $p\in\cP$ so that $\vf\circ f$ may fail to be Lipschitz on $\bar p$ (preventing us from using Lemma \ref{lem:lip}). At the same time we would like to approximate $\xi$ by more regular functions since during the computation we will need to take the derivative of the weigh and $\xi$ is only H\"older. A nice possibility is to use piecewise constant functions $\xi_k$, so that the problem of taking derivatives can be handled just by a refining of the partition $\cP$. This procedure must be done with some care since in the following it is essential to retain the property $\xi_k\cdot f'\in L^\infty$. 

Since, by hypothesis, $|\xi (\vf\circ f)'|_{\infty}\leq |\xi f'|_{\infty}\leq C_\#$, it follows that $\xi$ is zero where $f'$ blows up. For each $\ve>0$ we can then consider the functions $\bar\xi_{\ve}(z)=\max\{|\xi(z)|-\ve, 0\}$ and $\tilde\xi_\ve=\frac{\xi}{|\xi|}\cdot\bar \xi_\ve$. Clearly $\tilde\xi_\ve$ is zero in a neighborhood of the points in which $f'$ explodes, also $|\tilde \xi_\ve|\leq |\xi|$ and they have $\beta$-H\"older constant uniformly (in $\ve$) proportional. 
By Lebesgue Dominated Convergence Theorem, for each $h\in\cB_\alpha,\vf\in\cC^1$, $|\vf|_\alpha\leq 1$ there exists $\ve$ such that
\begin{equation}\label{eq:part0}
\left|\sum_{p\in\cP}\int_p h (\xi-\tilde\xi_\ve) (\vf\circ f)'\right|\leq |h|_{L^1}.
\end{equation}
Next, for each $k\in\bN$, let $\cP_k$ be a refinement of $\cP$ such that all the elements of $\cP$ of length larger than $2^{-k+1}$ are partitioned in elements of length between $2^{-k+1}$ and $2^{-k}$. Let $\cPl_k=\{p\in\cP_k \;:\; p\not\in\cP\}$ (this is the collection of elements that come from the refinement and hence are longer than $2^{-k}$, note that they are a finite number) and $\cPs_k=\{p\in\cP_k \;:\; p\in\cP\}$ (these are the shorter element). For each $p\in\cP_k$ let $x_p\in\bar p$ be such that $|\tilde\xi_{\ve}(x_p)|=\inf_{z\in p}|\tilde\xi_{\ve}(z)|$. For each $k\in\bN$ let $\xi_k(x)=\tilde\xi_\ve(x_p)$ for all $x\in p\in\cP_k$. By construction, $\xi_k\in L^\infty$ and
\[
\left|\xi_k-\tilde\xi_\ve\right|_\infty\leq C_\# 2^{-\beta k}.
\]
It is now convenient to define $\rho_k=\xi_{k+1}-\xi_{k}$. Note that, for all $k_0\in\bN$,
\begin{equation}\label{eq:rho}
\begin{split}
&\sum_{k\geq k_0}\rho_k=\tilde\xi_\ve-\xi_{k_0},\\
&|\rho_k|_\infty\leq C_\# 2^{-\beta k}.
\end{split}
\end{equation}
Hence,\footnote{ From now on we will write $(\vf\circ f)'$ for $\sum_{p\in\cP_{k}}\Id_p(\vf\circ f)'$, i.e. the derivative is meant in the strong sense but only where it is defined. By the way, given a set $A$, the indicator function $\Id_A$ is defined by $\Id_A(x)=1$ if $x\in A$ and zero otherwise.}
\begin{equation}\label{eq:ly-0}
\begin{split}
&\sum_{p\in\cP_{k_0}}\int_p h \tilde\xi_\ve(\vf\circ f)'=\sum_{k\geq k_0}\sum_{p\in\cP_{k_0}}\int_p h(\vf\circ f)'\rho_k+\sum_{p\in\cP_{k_0}}\int_p h (\vf\circ f)'\xi_{k_0}\\
&=\sum_{p\in\cP_{k_0}}\int_p h (\vf\circ f\cdot \xi_{k_0})'
+\sum_{k\geq k_0}\int_0^1 h\frac d{dx}\left[\int_0^x\sum_{p\in\cP_{k_0}}\Id_p(\vf\circ f)'\rho_k\right] ,
\end{split}
\end{equation}
where the convergence of the series on the first line follows because $f'$ is bounded on the support of $\tilde\xi_\ve$, an hence on the support of the $\rho_k$.
To continue, let $\tilde\ell$ be linear in each $p\in\cP^{\text{long}}_{k_0}$ and equal to $\vf\circ f\cdot \xi_{k_0}$ on $\partial p$.
Then we define
\[
\zeta_{k_0}(x)=\begin{cases}\vf\circ f\cdot \xi_{k_0}(x)-\tilde \ell(x)\quad&\forall x\in p\in\cP^{\text{long}}_{k_0}\\
0&\text{otherwise}.
\end{cases}
\]
Note that $\zeta_{k_0} \in \cC^0([0,1],\bR)$, in fact Lipschitz.  In addition, for $x,y\in p\in\cP$, 
\begin{equation}\label{eq:basic}
\left|\vf\circ f(x)-\vf\circ f(y)\right|\leq \left|\int_x^yf'(z)dz\right|^\alpha\leq |f'(w)|^\alpha|x-y|^\alpha
\end{equation}
for some $w\in [x,y]$.
Thus for each $x,y\in p\in\cPl_{k_0}$,
\[
\left|\zeta_{k_0}(x)-\zeta_{k_0}(y)\right|\leq 2|\xi (f')^\alpha|_\infty|x-y|^\alpha .
\]
On the other hand, if $x$ and $y$ belong to different elements of $p\in\cPl_{k_0}$, let $b_1, b_2\in [x,y]$ the boundaries of the elements to which $x$ and $y$ belong, respectively. Since, by construction, $\zeta_{k_0}=0$ at the boundaries of the elements of $\cP_{k_0}$, we have\footnote{ In the last line we use H\"older inequality: $\sum_i a_i b_i\leq \left[\sum_i a_i^\frac 1\alpha\right]^{\alpha}\left[\sum_i b_i^{\frac 1{1-\alpha}}\right]^{1-\alpha}$.}
\[
\begin{split}
\left|\zeta_{k_0}(x)-\zeta_{k_0}(y)\right|&\leq  2 |\xi(f')^\alpha|_\infty (|x-b_1|^\alpha+|y-b_2|^\alpha)\\
&\leq 2^{2-\alpha} |\xi(f')^\alpha|_\infty|x-y|^\alpha.
\end{split}
\]

Putting together the above facts we have
\begin{equation}\label{eq:first}
\begin{split}
&\left|\zeta_{k_0}\,\right|_\infty\leq 2 |\xi |_\infty\\
&\left|\zeta_{k_0}\,\right|_{\alpha}\leq 6  |\xi \cdot(f')^\alpha|_\infty.
\end{split}
\end{equation}
We can then write the first term of the second line of \eqref{eq:ly-0} as
\[
\begin{split}
&\sum_{p\in\cP_{k_0}}\int_p h (\vf\circ f\cdot \xi_{k_0})'=\int_0^1 h\zeta_{k_0}'+\sum_{p\in\cPl_{k_0}}\int_p h\tilde\ell'+\sum_{p\in\cPs_{k_0}}\int_p h (\vf\circ f\cdot \xi_{k_0})'\\
&\leq 6|\xi \cdot(f')^\alpha|_\infty\|h\|_\alpha+ C_{k_0} |h|_{L^1}+\int_0^1 h \frac d{dx}\int_0^x\sum_{p\in\cPs_{k_0}}\Id_p(\vf\circ f\cdot \xi_{k_0})'.
\end{split}
\]
Note that we cannot avoid the separation between the short and long pieces: if we would have defined $\tilde\ell$ as a linear interpolation on all the intervals, then $\sup_p|\tilde\ell'|_{\cC^0(\bar p,\bR)}$ could have been infinite. As is made clear clear by the above expression, to handle the short pieces we use a by now standard idea: we estimate using the strong norm. Thus we must compute the norm of the test function:
\[
\begin{split}
&\left|\int _x^y\sum_{p\in\cPs_{k_0}}\Id_p(\vf\circ f\cdot \xi_{k_0})'\right|\leq \sum_{\substack{p\in\cPs_{k_0}\\p\cap [x,y]\neq \emptyset}}\sup_{z\in p}|\xi (z) f'(z)^\alpha| |p\cap [x,y]|^\alpha\\
&\leq \left[\sum_{p\in\cPs_{k_0}}\sup_{z\in p}|\xi (z) f'(z)^\alpha|^{\frac 1{1-\alpha}}\right]^{1-\alpha}|x-y|^\alpha \\
&\leq C_\#|\xi f'|_\infty^{\frac{\alpha-\gamma}{1-\gamma}}\left[\sum_{\{p\in \cP\;:\; |p|\leq 2^{-k_0}\}}\sup_{z\in p}|\xi (z)f'(z)^\gamma|^{\frac{1}{1-\gamma}}\right]^{1-\alpha}\hskip-.5cm |x-y|^\alpha\leq \frac{|\xi \cdot(f')^\alpha|_\infty}2|x-y|^\alpha
\end{split}
\]
where we have used the hypothesis $\alpha\geq \gamma$, used condition \eqref{eq:optimal?} and chosen $k_0$ large enough (so that the tail of the convergent series is as small as needed).
Accordingly
\begin{equation}\label{eq:ly-1}
\sum_{p\in\cP_{k_0}}\int_p h (\vf\circ f\cdot \xi_{k_0})'\leq 7\,|\xi \cdot(f')^\alpha|_\infty\|h\|_\alpha+ C_{k_0} |h|_{L^1}.
\end{equation}
To estimate the second term in the second line of \eqref{eq:ly-0} note that, by construction, $\rho_k$ is zero on the elements $p\in \cPs_{k+1}$. We can then consider a piecewise linear approximation $\ell_k$ of $\vf\circ f$ constructed by taking linear pieces on each $p\in\cPl_{k+1}$ and such that the two functions are equal on $\partial\cPl_{k+1}$. Then, we define $\eta_k(x)=0$ if $x\in p\in \cPs_{k+1}$ and $\eta_k=\rho_k(\vf\circ f-\ell_k)$ otherwise. Note that $\eta_k\in\cC^0$ and Lipschitz.  We can write the test functions in the second term in the second line of \eqref{eq:ly-0} as
\begin{equation}\label{eq:psik}
\psi_k(x):=\int_0^x(\vf\circ f)'\rho_k=\int_0^x\eta_k'+ \int_0^x\ell_k'\rho_k.
\end{equation}
We are left with the task of estimating $|\psi_k|_\alpha$. We will treat the two terms separately.

To start, note that (by \eqref{eq:rho}) 
\[
\left|\int_x^y\eta_k'\right|=\left|\sum_{p\in\cPl_{k+1}}\int_{p\cap [x,y]}\eta_k'\right|\leq C_\#2^{-\beta k}
\]
since at most two of the elements of the sum are non zero, given that $\eta_k$ is zero at the boundaries of $\cP_{k+1}$. On the other hand if $p=[a,b]$ and $p\cap [x,y]=[a',b']$ we have\footnote{ To obtain the last line note that the second term in the curly bracket of the second line is bounded by $C_\#|p|^{\alpha-1}|p\cap [x,y]|$ and $|p\cap [x,y]|\leq |p|^{1-\alpha}|x-y|^\alpha$.}
\[
\begin{split}
\left|\int_{p\cap[x,y]}\eta_k'\right|&\leq |\rho_k(a)|\left \{|\vf(f(b'))-\vf(f(a'))|+\frac{|\vf(f(b))-\vf(f(b))|}{|p|}|p\cap [x,y]|\right\}\\
&\leq |\rho_k(a)|^{1-\alpha}2^\alpha\left \{\left[\int_{p\cap[x,y]}|\xi f'|\right]^\alpha+\left[\int_{p}|\xi f'|\right]^\alpha\frac{|p\cap [x,y]|}{|p|}\right\}\\
&\leq C_\# 2^{-\beta(1-\alpha) k} |x-y|^\alpha
\end{split}
\]
where we have used the hypothesis $\xi f'\in L^\infty$ and \eqref{eq:basic}. Accordingly,
\begin{equation}\label{eq:part-one}
\left|\int_0^{(\cdot)} \eta_k'\right|_\alpha\leq C_\# 2^{-\beta(1-\alpha) k}.
\end{equation}

Next, we must estimate the second term in \eqref{eq:psik}
\begin{equation}\label{eq:start-last}
\begin{split}
\left|\int_x^y\ell'_k\rho_k\right|&\leq \sum_{\substack{p\in\cPl_{k+1}\\p\cap[x,y]\neq\emptyset}}|\rho_k|_{L^\infty(p)}\left|\int_pf'\right|^\alpha\frac{|p\cap[x,y]|}{|p|}\\
&\leq C_\# 2^{-\beta(1-\alpha) k}\sum_{\substack{p\in\cPl_{k+1}\\p\cap[x,y]\neq\emptyset}}\left|\int_p|\xi f'|\right|^\alpha\frac{|p\cap[x,y]|}{|p|}\\
&\leq C_\# 2^{-\beta(1-\alpha) k+(1-\alpha)k}|x-y|\leq C_\# 2^{-\epsilon(1-\alpha) k} |x-y|^\alpha,
\end{split}
\end{equation}
provided
\begin{equation}\label{eq:xy-small}
|x-y|\leq 2^{-[\epsilon +(1-\beta)]k}.
\end{equation}
To treat the $x,y$ for which \eqref{eq:xy-small} fails we estimate differently the first line of \eqref{eq:start-last}. To do so it is convenient to divide the discussion in two case. If $r\leq 1$, then
\begin{equation}\label{eq:almost1}
\begin{split}
\left|\int_x^y\ell'_k\rho_k\right|&\leq \sum_{\substack{p\in\cPl_{k+1}\\p\cap[x,y]\neq\emptyset}}|\rho_k|_{L^\infty(p)}^{1-\alpha(1-r)}\left|\int_p|\xi|^{1-r}f'\right|^\alpha\frac{|p\cap[x,y]|^{1-\alpha}}{|p|^{1-\alpha}}\\
&\leq C_\#2^{-\beta(1-\alpha(1-r))k+(1-\alpha)k}\sum_{p\in\cP_{k+1}}\left|\int_p |f'|^r\right|^\alpha |p\cap[x,y]|^{1-\alpha}\\
&\leq C_\#2^{-\beta(1-\alpha(1-r))k+(1-\alpha)k}\left|\int_0^1 |f'|^r\right|^\alpha |y-x|^{1-\alpha}.
\end{split}
\end{equation}
If $\alpha\leq \frac 12$, then, for some $\epsilon>0$,
\begin{equation}\label{eq:part2}
\left|\int_x^y\ell'_k\rho_k\right|\leq C_\# 2^{-\epsilon k}|y-x|^{\alpha}.
\end{equation}
provided $\alpha>\frac{1-\beta}{1-\beta(1-r)}$. 
If, instead, $\alpha >\frac 12$ then we can continue the estimate in \eqref{eq:almost1} by using \eqref{eq:xy-small} to yield
\[
\left|\int_x^y\ell'_k\rho_k\right|\leq C_\# 2^{-\beta(1-\alpha(1-r))k+(1-\alpha)k} 2^{[\epsilon+(1-\beta)](2\alpha-1)k}|x-y|^\alpha\leq C_\# 2^{-\epsilon k}|x-y|^\alpha
\]
provided $\beta>\frac {1}{1+r}$ and $\epsilon$ has been chose small enough.

On the other hand, if $r>1$, we have
\[
\begin{split}
\left|\int_x^y\ell'_k\rho_k\right|&\leq \sum_{\substack{p\in\cPl_{k+1}\\p\cap[x,y]\neq\emptyset}}|\rho_k|_{L^\infty(p)}\left|\int_p|f'|^r\right|^{\frac{\alpha}r}|p\cap[x,y]|\, |p|^{\alpha(1-\frac 1r)-1}\\
&\leq\sum_{\substack{p\in\cPl_{k+1}\\p\cap[x,y]\neq\emptyset}}2^{-\beta k+(1-\alpha)k}\left|\int_p|f'|^r\right|^{\frac{\alpha}r}|p\cap[x,y]|^{1-\frac\alpha r}\\
&\leq C_\# 2^{-\beta k+(1-\alpha)k}|x-y|^{1-\frac\alpha r}\leq C_\#2^{-\epsilon k}|x-y|^\alpha,
\end{split}
\]
provided 
\[
\frac r{1+r}\geq \alpha>1-\beta.
\]
While if $\alpha>\frac r{1+r}$, then
\begin{equation}\label{eq:part3}
\left|\int_x^y\ell'_k\rho_k\right|\leq C_\# 2^{-\beta k+(1-\alpha)k}2^{(\alpha+\frac{\alpha}r-1)[(1-\beta)+\epsilon]k} |x-y|^{\alpha}\leq 2^{-\epsilon k}|x-y|^{\alpha}
\end{equation}
provided $\beta>\frac 1{1+r}$ and $\epsilon$ has been chosen small enough.
Collecting equations \eqref{eq:part-one}, \eqref{eq:part2} and \eqref{eq:part3} we have that, for $\beta>\frac 1{1+r}$ and $\alpha>\frac{1-\beta}{1-\beta(1-r)}$, 
\[
|\psi_k|_\alpha\leq C_\#2^{-\epsilon k}.
\]
Thus, by choosing $k_0$ large enough, we have
\begin{equation}\label{eq:ly-2}
\sum_{k\geq k_0}\int_0^1 h\frac d{dx}\left[\int_0^x\sum_{p\in\cP_{k_0}}\Id_p(\vf\circ f)'\rho_k\right] \leq |\xi \cdot(f')^\alpha|_\infty\|h\|_\alpha.
\end{equation}
Finally, collecting \eqref{eq:part0}, \eqref{eq:ly-0},  \eqref{eq:ly-1},  \eqref{eq:ly-3}, we have
\begin{equation}\label{eq:ly-3}
\int_0^1 h \xi(\vf\circ f)'\leq 8\,|\xi \cdot(f')^\alpha|_\infty\|h\|_\alpha+ C_\#|h|_{L^1},
\end{equation}
from which the Lemma follows.
\end{proof}

We conclude with an alternative result, in order to give a taste of the available possibilities mentioned in Remark \ref{rem:infty}.
\begin{thm}\label{thm:main2}
If the potential is uniformly $\beta$-H\"older on the elements of the partition (see Remark \ref{rem:infty}), then we do not need to impose any condition on the integrability of $f'$ and Theorem \ref{thm:main} holds under the single condition $1>\alpha>\max\{\gamma,1-\beta\}$.
\end{thm}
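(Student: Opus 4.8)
The plan is to reuse the proof of Lemma \ref{LY} almost verbatim, changing only the one place where the integrability of $f'$ was used, and to exploit that the weight now has the form $\xi=e^{\phi}$ with $\phi$ uniformly $\beta$-H\"older on the elements of $\cP$. The first observation is that this hypothesis upgrades the \emph{additive} control of the oscillation of $\xi$ used in Lemma \ref{LY} to a \emph{multiplicative} one: on any interval $J$ of length at most $2^{-k}$ contained in an element of $\cP$, the oscillation of $\phi$ is at most $C_\#2^{-\beta k}$, whence $|\xi(x)-\xi(y)|\leq C_\# 2^{-\beta k}\inf_J|\xi|$ and, for $k$ large, $\inf_J|\xi|\geq\frac12|\xi|_{L^\infty(J)}$. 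In particular the increments $\rho_k=\xi_{k+1}-\xi_k$ of the piecewise constant approximations now satisfy $|\rho_k|_{L^\infty(p)}\leq C_\# 2^{-\beta k}|\xi|_{L^\infty(p)}$ for every $p\in\cP_{k+1}$. A second, free, consequence is that $\phi$ being H\"older on each $\bar p$ keeps $\xi$ bounded away from $0$ and $\infty$ there, so $\xi\cdot f'\in L^\infty$ forces $f'$ to be bounded on each $\bar p$ (with a bound that may degenerate only as $p$ runs through the infinite partition). Hence $f'$ no longer blows up inside a piece and the truncation producing $\tilde\xi_\ve$ in \eqref{eq:part0} becomes unnecessary; one works directly with $\xi$.

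With these two facts in hand I would copy the argument of Lemma \ref{LY} up to and including the decomposition \eqref{eq:ly-0}. The estimate of the short-piece term, leading to \eqref{eq:ly-1}, is unchanged: it uses only $\alpha\geq\gamma$ and condition \eqref{eq:optimal?}, and is exactly what forces $\alpha>\gamma$. The $\eta_k$ contribution \eqref{eq:part-one} is likewise unchanged, since its proof invoked only $\xi f'\in L^\infty$ and \eqref{eq:basic}, both still available, and it yields the summable factor $2^{-\beta(1-\alpha)k}$. Thus the entire burden falls on the single term $\int_x^y\ell_k'\rho_k$, whose treatment in \eqref{eq:almost1}--\eqref{eq:part3} was the only point using $|f'|^r\in L^1$ and $\beta>\frac{1}{r+1}$, and which I would now redo.

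For that term I would start from the first line of \eqref{eq:start-last} and insert the multiplicative bound $|\rho_k|_{L^\infty(p)}\leq C_\#2^{-\beta k}|\xi|_{L^\infty(p)}$ together with two elementary facts valid on each long piece $p\in\cPl_{k+1}$: since $f$ is monotone on $p$ its increment across $p$ has modulus $\leq 1$, so $|\int_p f'|\leq 1$, and by the relative control above $|\xi|_{L^\infty(p)}\sup_p|f'|\leq C_\#|\xi f'|_\infty$, so that $|\xi|_{L^\infty(p)}|\int_pf'|^\alpha\leq C_\#|\xi|_{L^\infty(p)}^{1-\alpha}|p|^\alpha$. Splitting the sum into the at most two boundary pieces and the interior pieces $p\subset[x,y]$, the boundary pieces contribute $C_\#2^{-\beta k}|x-y|^\alpha$ after bounding $\frac{|p\cap[x,y]|}{|p|}\leq(2^{k+1}|x-y|)^\alpha$ and using $|p|\leq 2^{-k+1}$, while for the interior pieces (where $\frac{|p\cap[x,y]|}{|p|}=1$) I would apply H\"older's inequality with exponents $\frac1{1-\alpha},\frac1\alpha$:
\[
\sum_{p\subset[x,y]}|\xi|_{L^\infty(p)}^{1-\alpha}|p|^\alpha\leq\Big(\sum_{p\subset[x,y]}|\xi|_{L^\infty(p)}\Big)^{1-\alpha}|x-y|^\alpha.
\]
Since there are at most $2^{k+1}|x-y|$ interior long pieces and $|\xi|_{L^\infty(p)}\leq|\xi f'|_\infty$, the first factor is at most $C_\#(2^k|x-y|)^{1-\alpha}$, giving an interior contribution $C_\#2^{(1-\alpha-\beta)k}|x-y|$. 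As $|x-y|\leq|x-y|^\alpha$, both pieces yield $\int_x^y\ell_k'\rho_k\leq C_\#2^{-\delta k}|x-y|^\alpha$ for some $\delta>0$ precisely when $\alpha>1-\beta$, with no regime splitting and no use of \eqref{eq:xy-small}. Combined with \eqref{eq:part-one} this gives $|\psi_k|_\alpha\leq C_\#2^{-\delta k}$, so \eqref{eq:ly-2} holds for $k_0$ large, and assembling the pieces as in Lemma \ref{LY} produces the Lasota--Yorke inequality under the single requirement $\alpha>\max\{\gamma,1-\beta\}$; Theorem \ref{thm:main2} then follows exactly as Theorem \ref{thm:main} did.

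The main obstacle, and the conceptual heart of the matter, is the very first step: recognizing that the potential hypothesis converts the additive oscillation estimate into the multiplicative one $|\rho_k|_{L^\infty(p)}\leq C_\#2^{-\beta k}|\xi|_{L^\infty(p)}$. This is exactly what makes the pieces on which $f'$ is large---where $\xi$ is correspondingly small---contribute a proportionally small weight variation, so that the crude count of at most $2^k$ long pieces, each carrying a uniformly bounded weight, suffices to close the estimate without any integrability hypothesis on $f'$. The remaining technical point is to balance the $2^{(1-\alpha)k}$ coming from the piece count against the $2^{-\beta k}$ from the oscillation, via H\"older's inequality, so as to leave exactly the summable factor $2^{(1-\alpha-\beta)k}$.
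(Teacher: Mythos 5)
Your proposal is correct and follows essentially the same route as the paper's own proof: both rest on upgrading the additive bound $|\rho_k|_\infty\leq C_\# 2^{-\beta k}$ to the multiplicative bound $|\rho_k|\leq C_\# 2^{-\beta k}|\xi|$, dropping the $\tilde\xi_\ve$ truncation, leaving all other steps of Lemma \ref{LY} untouched, and re-estimating only the term $\int_x^y\ell_k'\rho_k$ to get a bound of order $2^{-(\alpha+\beta-1)k}|x-y|\leq 2^{-(\alpha+\beta-1)k}|x-y|^\alpha$, which is summable exactly when $\alpha>1-\beta$. Your boundary/interior splitting with H\"older's inequality is a harmless elaboration of the paper's one-line summation (and the parenthetical justification ``$|p|\leq 2^{-k+1}$'' should invoke the lower bound $|p|\geq 2^{-k-1}$, which is what controls $|p\cap[x,y]|/|p|$), but the mechanism and the conclusion coincide.
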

\begin{proof}
We just need to prove Lemma \ref{LY} under the new condition. All the previous arguments are valid, the only difference is that now we do not need to introduce $\tilde\xi_\ve$ since $\xi$ is bounded away from zero on the elements of the partition and hence the derivative cannot explode. The approximation scheme yields 
\[
|\rho_k(x)|\leq C_\# 2^{-\beta k}|\xi(x)|
\]
rather than $|\rho_k|\leq C_\# 2^{-\beta k}$ as before. Accordingly, we can easily conclude the proof of Lemma \ref{LY} as follows.
\[
\left|\int_x^y\ell_k' \rho_k\right|\leq C_\# \sum_{p\in\cPl}\int_{p\cap[x,y]}2^{-\beta k}\frac{\left(\int_p |f'\xi|\right)^\alpha}{|p|}\leq C_\# 2^{-(\alpha+\beta-1)k}|x-y|.
\]
\end{proof}

\end{document}